\newtheorem{theorem}{Theorem}
\newenvironment{proof}[1][Proof]{\noindent\textbf{#1.} }{\ \rule{0.5em}{0.5em}}
\begin{document}

\title{A Note On The Spectral Norms of The Circulant Matrices Connected
Integer Number Sequences}
\author{Durmu\c{s} Bozkurt\thanks{%
e-mail: dbozkurt@selcuk.edu.tr} \and Department of Mathematics, Science
Faculty of Sel\c{c}uk University}
\maketitle

\begin{abstract}
In this paper, we compute the spectral norms of the matrices related with
integer squences and we give some example related with Fibonacci, Lucas,
Pell and Perrin numbers.
\end{abstract}

\bigskip Keywords: Fibonacci number, Lucas numbers, Pell numbers, Perrin
numbers, integer number sequence, spectral norm.

MS Classification Number: 15A60, 15F35, 15B36, 15B57

\section{Introduction}

\bigskip In [1], the upper and lower bounds for the spectral norms of the
matrices with Fibonacci and Lucas numbers are obtained by S. Solak. In [2],
Ipek obtained as $\left\Vert A\right\Vert _{2}=F_{n+1}-1$ and $\left\Vert
B\right\Vert _{2}=F_{n+2}+F_{n}-1$ the spectral norms of the matrices by
Solak defined in [1] where $F_{i}$ is the $i$th Fibonacci number.

Let $A$ be any $n\times n$ complex matrix. The well known the spectral norm
of the matrix $A$ is%
\begin{equation*}
\left\Vert A\right\Vert _{2}=\sqrt{\max_{1\leq i\leq n}\left\vert \lambda
_{i}(A^{H}A)\right\vert }
\end{equation*}%
where $\lambda _{i}(A^{H}A)$ is eigenvalue of $A^{H}A$\ and $A^{H}$ is
conjugate transpose of the matrix $A$.

\bigskip By a circulant matrix of order $n$ is meant a square matrix of the
form%
\begin{equation*}
C=circ(c_{0},c_{1},\ldots ,c_{n-1})=\left[ 
\begin{array}{ccccc}
c_{0} & c_{1} & c_{2} & \ldots & c_{n-1} \\ 
c_{n-1} & c_{0} & c_{1} & \ldots & c_{n-2} \\ 
c_{n-2} & c_{n-1} & c_{0} & \ldots & c_{n-3} \\ 
\vdots & \vdots & \vdots & \ddots & \vdots \\ 
c_{1} & c_{2} & c_{3} & \ldots & c_{0}%
\end{array}%
\right] .[3]
\end{equation*}

Now we define our matrix. $(x_{n})$ is any positive integer numbers squence
and $x_{i}$ is $i$th the component of the sequence $(x_{n})$ for $%
i=0,1,2,\ldots .$ Let matrix $A_{x}$ be following form:%
\begin{equation}
A_{x}=circ(x_{0},x_{1},\ldots ,x_{n-1}).  \tag{1}  \label{1}
\end{equation}

The main objective of this paper is to obtain the spectral norms of the
matrices $A_{x}$ in (\ref{1}).

\section{Main Result}

\begin{theorem}
Let the matrix $A_{x}$ be as in (\ref{1}). Then%
\begin{equation*}
\left\Vert A_{x}\right\Vert _{2}=\dsum\limits_{i=0}^{n-1}x_{i}.
\end{equation*}%
where $\left\Vert .\right\Vert _{2}$ is the spectral norm and $x_{j}$ \ are $%
j$th components of the sequence $(x_{n})$.
\end{theorem}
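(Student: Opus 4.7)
The plan is to exploit the very special structure of circulant matrices, for which both the eigenvalues and the normality property are classical.

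First I would observe that $A_x$ is a normal matrix: every circulant commutes with its conjugate transpose, because both are polynomials in the basic cyclic shift $P = \mathrm{circ}(0,1,0,\ldots,0)$. For a normal matrix $N$, the spectral norm coincides with the spectral radius, since $N^H N$ and $N$ are simultaneously unitarily diagonalizable and $|\lambda_i(N^H N)| = |\lambda_i(N)|^2$. Hence
\begin{equation*}
\left\Vert A_x \right\Vert_2 = \max_{0 \le k \le n-1} \left\vert \lambda_k(A_x) \right\vert .
\end{equation*}

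Next I would invoke the standard diagonalization of circulants by the Fourier matrix, which gives the eigenvalues explicitly as
\begin{equation*}
\lambda_k(A_x) = \sum_{j=0}^{n-1} x_j\, \omega^{jk}, \qquad \omega = e^{2\pi i/n}, \quad k = 0,1,\ldots,n-1.
\end{equation*}
Since by hypothesis the sequence $(x_n)$ consists of positive (in particular non-negative) integers, the triangle inequality yields
\begin{equation*}
\left\vert \lambda_k(A_x) \right\vert \le \sum_{j=0}^{n-1} \left\vert x_j\, \omega^{jk} \right\vert = \sum_{j=0}^{n-1} x_j
\end{equation*}
for every $k$, with equality for $k = 0$ because $\omega^{0} = 1$ makes every term real and positive. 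Combining this with the previous displayed equation delivers the claimed identity.

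The only nontrivial ingredients are (i) the normality of circulant matrices and (ii) the explicit formula for their eigenvalues, both of which are textbook facts about the circulant algebra; I do not foresee a real obstacle. The one place where care is needed is to point out that positivity of the $x_j$ is essential for the triangle inequality to be tight at $k = 0$: if some $x_j$ were negative or complex, the bound $\sum_j x_j$ would typically be strict and the identity would fail.
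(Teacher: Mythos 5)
Your proof is correct, and it shares only its first step with the paper's argument. Both you and the author begin by noting that circulants are normal, so the spectral norm reduces to the spectral radius. From there the routes diverge. The paper stays inside Perron--Frobenius theory: it observes that $A_x$ is an irreducible nonnegative matrix, that the all-ones vector $y$ satisfies $A_x y = \left( \sum_{i=0}^{n-1} x_i \right) y$, and that an eigenvalue admitting a positive eigenvector must be the Perron root, hence equals the spectral radius. You instead invoke the explicit Fourier diagonalization $\lambda_k(A_x) = \sum_{j=0}^{n-1} x_j \omega^{jk}$ and finish with the triangle inequality, which is tight at $k=0$ because the $x_j$ are positive. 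Your version is more self-contained for circulants specifically, since the eigenvalue formula is elementary and you need no nonnegative-matrix theory; you also correctly isolate where positivity enters (tightness of the triangle inequality), which the paper leaves implicit in the appeal to the Perron root. The paper's version, on the other hand, never needs the eigenvalue formula and would apply verbatim to any irreducible nonnegative normal matrix with constant row sums, so it is the more structurally general argument even though it leans on a heavier theorem. Both proofs are complete and correct.
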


\begin{proof}
Since the circulant matrices are normal, the spectral norm of the circulant $%
A_{x}$ is equal to its spectral radius. Also the circulant $A_{x}$ is
irreducible and its entries are nonnegative. Therefore the spectral radius
of the matrix $A_{x}$ is the same its Perron root.

Let $y$ be a vector with all the components $1$. Then%
\begin{equation*}
A_{x}y=\left( \dsum\limits_{i=0}^{n-1}x_{i}\right) y.
\end{equation*}%
Since $\tsum\nolimits_{i=0}^{n-1}x_{i}$ is an eigenvalues of $A_{x}$
corresponding a positive eigenvector, it must be the Perron root of the
matrix $A_{x}$. Then we have%
\begin{equation*}
\left\Vert A_{x}\right\Vert _{2}=\dsum\limits_{i=0}^{n-1}x_{i}.
\end{equation*}%
The proofs are completed.\bigskip
\end{proof}

\section{Numerical Consideration}

\bigskip The well known $F_{n},L_{n},P_{n}$ and $R_{n}$ are $n$th Fibonacci,
Lucas, Pell and Perrin numbers with recurence relations 
\begin{equation*}
F_{n}=\left\{ 
\begin{array}{lr}
0 & \text{if }n=0 \\ 
1 & \text{if }n=1 \\ 
F_{n}=F_{n-1}+F_{n-2} & \text{if }n>1%
\end{array}%
,\right. L_{n}=\left\{ 
\begin{array}{lr}
2 & \text{if }n=0 \\ 
1 & \text{if }n=1 \\ 
L_{n}=L_{n-1}+L_{n-2} & \text{if }n>1%
\end{array}%
\right.
\end{equation*}%
\begin{equation*}
P_{n}=\left\{ 
\begin{array}{lr}
0 & \text{if }n=0 \\ 
1 & \text{if }n=1 \\ 
P_{n}=2P_{n-1}+P_{n-2} & \text{if }n>1%
\end{array}%
,\right. R_{n}=\left\{ 
\begin{array}{rr}
3 & \text{if }n=0 \\ 
0 & \text{if }n=1 \\ 
2 & \text{if }n=2 \\ 
R_{n}=R_{n-2}+R_{n-3} & \text{if }n>2%
\end{array}%
\right. ,
\end{equation*}%
respectively.

If the sequence $(x_{n})$ is the Fibonacci and Lucas sequences then the
following results are obtained:%
\begin{equation*}
\left\Vert A_{F}\right\Vert _{2}=\dsum\limits_{i=0}^{n-1}F_{i}=F_{n+1}-1\ [2]
\end{equation*}%
and%
\begin{equation*}
\left\Vert A_{L}\right\Vert
_{2}=\dsum\limits_{i=0}^{n-1}L_{i}=F_{n+2}+F_{n}-1\ [2]
\end{equation*}

If the sequence $(x_{n})$ is Pell and Perrin sequences then we have%
\begin{equation*}
\left\Vert A_{P}\right\Vert _{2}=\dsum\limits_{i=0}^{n-1}P_{i}=\frac{1}{2}%
(P_{n}+P_{n-1}-1)
\end{equation*}

and%
\begin{equation*}
\left\Vert A_{R}\right\Vert _{2}=\dsum\limits_{i=0}^{n-1}R_{i}=R_{n+4}-1.
\end{equation*}

\end{document}